
\documentclass[11pt,psamsfonts]{amsart}%
\usepackage{amsmath}
\usepackage{amsthm}
\usepackage{amssymb}
\usepackage{amscd}
\usepackage{amsfonts}
\usepackage{amsbsy}
\usepackage{graphicx}
\usepackage[dvips]{psfrag}%
\setcounter{MaxMatrixCols}{30}

\def\e{\varepsilon}

\newtheorem {theorem} {Theorem}

\newtheorem {corollary} [theorem] {Corollary}
\newtheorem {lemma} [theorem] {Lemma}

\newtheorem {remark} {Remark}
\begin{document}
\title[On the periodic solutions of a rigid dumbbell satellite]{On the periodic solutions of a rigid dumbbell satellite in a circular orbit}
\author[J.L.G. Guirao, J.A. Vera, B.A. Wade]{Juan L.G. Guirao$^{1}$, Juan A. Vera$^{2}$ and Bruce A. Wade$^{3}$}
\address{$^{1}$ Departamento de Matem\'{a}tica Aplicada y Estad\'{\i}stica. Universidad
Polit\'{e}cnica de Cartagena, Hospital de Marina, 30203--Cartagena, Regi\'{o}n
de Murcia, Spain.--Corresponding Author--}

\email{juan.garcia@upct.es}

\address{$^{2}$ Centro Universitario de la Defensa. Academia General del Aire.
Universidad Polit\'{e}cnica de Cartagena, 30720-Santiago de la Ribera,
Regi\'{o}n de Murcia, Spain.}

\email{juanantonio.vera@cud.upct.es}

\address{$^{3}$ Department of Mathematical Sciences,
University of Wisconsin--Milwaukee,
Milwaukee, WI 53201-0413, USA.}

\email{wade@uwm.edu}

\thanks{2010 Mathematics Subject Classification. Primary: 70E17, 70E20, 70E40.
Secondary: 37C27.}

\keywords{Dumbbell satellite, periodic orbits, averaging theory}

\maketitle

\begin{abstract}
The aim of the present paper is to provide sufficient conditions for the existence of periodic
solutions of the perturbed attitude dynamics of a rigid dumbbell satellite in
a circular orbit.
\end{abstract}

\section{Introduction and statement of the main results}

\label{s1}
In this paper we consider the attitude dynamics, perturbed by small torques,
of a rigid dumbbell satellite in a circular orbit under the gravitational torque
of a central Newtonian force field. Our objective is to provide sufficient conditions for the existence of periodic motions
about the satellite's center of mass that are asymptotic to translational motion in an
absolute coordinate system. This type of motion,
denoted, \emph{cylindrical equilibrium}, is well known in the astrophysics literature
on satellite's dynamics (see for instance \cite{Guirao3, Vera2, Vera}).

These motions have an important application to satellite orientation problems because
a satellite can reach some specified nominal regime along periodic
trajectories only through the influence of gravitational torques and other
small perturbed torques induced by some control mechanism. 

\smallskip

Following the methods developed in \cite{Vera}, the equations of motion governing the attitude
dynamics of a rigid dumbbell satellite are
\begin{equation}%
\begin{array}
[c]{l}%
\dfrac{d^{2}\theta}{dt^{2}}-2\dfrac{d\phi}{dt}\left(  1+\dfrac{d\theta}%
{dt}\right)  \tan\phi+3\sin\theta\cos\theta=\varepsilon F_{1}^{\ast}\left(
t,\theta,\dfrac{d\theta}{dt},\phi,\dfrac{d\phi}{dt}\right)  ,\medskip\\
\dfrac{d^{2}\phi}{dt^{2}}+\left(  \left(  1+\dfrac{d\theta}{dt}\right)
^{2}+3\cos^{2}\theta\right)  \sin\phi\cos\phi=\varepsilon F_{2}^{\ast}\left(
t,\theta,\dfrac{d\theta}{dt},\phi,\dfrac{d\phi}{dt}\right)  ,
\end{array}
\label{dumb}%
\end{equation}
\medskip with $\theta$ and\textbf{ }$\phi$ the eulerian angles of nutation and
precession. The perturbed torques $F_{i}^{\ast}$, are smooth functions
periodic in the variable $t$ with
\[
F_{1}^{\ast}\left(  t,0,\dfrac{d\theta}{dt},0,\dfrac{d\phi}{dt}\right)
\equiv0\text{, }\ \text{ }F_{2}^{\ast}\left(  t,0,\dfrac{d\theta}{dt}%
,0,\dfrac{d\phi}{dt}\right)  \equiv0,
\]
\smallskip and $\varepsilon$ a small real parameter. In this work we are
interesed in the periodic functions emerging from the equilibrium solution
$\theta=0$ and $\phi=0$ of (\ref{dumb}) when $\varepsilon\rightarrow0.$

By means of the change of coordinates $x=\theta$ and $y=\phi$ and linearizing
the equations of (\ref{dumb}) in this equilibrium we obtain
\begin{equation}%
\begin{array}
[c]{c}%
\dfrac{d^{2}x}{dt^{2}}+3x=\varepsilon F_{1}\left(  t,x,\dfrac{dx}{dt}%
,y,\dfrac{dy}{dt}\right)  +\varepsilon^{2}R_{1}\left(  t,x,\dfrac{dx}%
{dt},y,\dfrac{dy}{dt},\varepsilon\right)  \medskip\\
\dfrac{d^{2}y}{dt^{2}}+4y=\varepsilon F_{2}\left(  t,x,\dfrac{dx}{dt}%
,y,\dfrac{dy}{dt}\right)  +\varepsilon^{2}R_{2}\left(  t,x,\dfrac{dx}%
{dt},y,\dfrac{dy}{dt},\varepsilon\right)
\end{array}
\label{e2}%
\end{equation}
with%
\[%
\begin{array}
[c]{c}%
F_{1}\left(  t,x,\dfrac{dx}{dt},y,\dfrac{dy}{dt}\right)  =f_{1}\left(
t,\dfrac{dx}{dt},\dfrac{dy}{dt}\right)  x+f_{2}\left(  t,\dfrac{dx}{dt}%
,\dfrac{dy}{dt}\right)  y\medskip\\
F_{2}\left(  t,x,\dfrac{dx}{dt},y,\dfrac{dy}{dt}\right)  =f_{3}\left(
t,\dfrac{dx}{dt},\dfrac{dy}{dt}\right)  x+f_{4}\left(  t,\dfrac{dx}{dt}%
,\dfrac{dy}{dt}\right)  y
\end{array}
\]
and $f_{i}$ smooth functions in the variables $\left(  t,\dfrac{dx}{dt}%
,\dfrac{dy}{dt}\right) .$ On the other hand the functions $f_{i}$ are
periodic in $t$ and in resonance $p$:$q$ with some of the periodic solutions
of the unperturbed dumbbell satellite given by
\begin{equation}
\begin{array}[c]{l}
\dfrac{d^{2}x}{dt^{2}}+3x=0,\vspace{0.2cm}\\
\dfrac{d^{2}y}{dt^{2}}+4y=0.
\end{array}
\label{e1a}
\end{equation}

The objective of this work is to provide, using as a main tool the averaging theory, a
system of nonlinear equations whose simple zeros provide periodic solutions of
the perturbed dumbbell satellite in circular orbit with equations of motion
given by (\ref{e2}). Some other works using similar techniques (see Llibre et al. \cite{BFL} for more details), are \cite{Guirao1,Guirao} or \cite{Guirao3} where the method is used for action angle variables. In order to present our results we need some preliminary
definitions and notations.

The unperturbed system \eqref{e1a} has a unique singular point, the origin
with eigenvalues
\[
\pm\sqrt{3}\,i,\quad\pm2\,i.
\]

Consequently this system in the phase space $\left(  x,\dfrac{dx}{dt}%
,y,\dfrac{dy}{dt}\right)  $ has two planes filled with periodic solutions with
the exception of the origin. These periodic solutions have periods
\[
T_{1}=\dfrac{2\pi}{\sqrt{3}}\quad\mbox{or}\quad T_{2}=\pi,
\]
according to whether they belong to the plane associated to the eigenvectors with
eigenvalues $\pm\sqrt{3}\,i$ or $\pm2\,i$, respectively. We shall study which
of these periodic solutions persist for the perturbed system \eqref{e2} when
the parameter $\varepsilon$ is sufficiently small and the perturbed functions
$F_{i}$ for $i=1,2$ have period either $pT_{1}/q$, or $pT_{2}/q$, where $p$
and $q$ are positive integers relatively prime.

We define the functions:%
\begin{equation}%
\begin{array}
[c]{l}%
\mathcal{F}_{1}(X_{0},Y_{0})=\displaystyle\dfrac{1}{2p\pi}%
{\displaystyle\int_{0}^{pT_{1}}}
\sin\left(  \sqrt{3}t\right)  \Delta_{1}(t)\text{ }f_{1}\left(  t,\Delta
_{2}(t),0\right)  dt,\vspace{0.2cm}\\
\mathcal{F}_{2}(X_{0},Y_{0})=\displaystyle\dfrac{\sqrt{3}}{2p\pi}%
{\displaystyle\int_{0}^{pT_{1}}}
\cos\left(  \sqrt{3}t\right)  \text{ }\Delta_{1}(t)\text{ }f_{1}\left(
t,\Delta_{2}(t),0\right)  dt,\vspace{0.2cm}\\
\mathcal{G}_{1}(Z_{0},W_{0})=\displaystyle\dfrac{1}{p\pi}%
{\displaystyle\int_{0}^{pT_{2}}}
\sin\left(  2t\right)  \text{ }\Delta_{3}(t)\text{ }f_{4}\left(
t,0,\Delta_{4}(t)\right)  dt,\vspace{0.2cm}\\
\mathcal{G}_{2}(Z_{0},W_{0})=\displaystyle\dfrac{2}{p\pi}%
{\displaystyle\int_{0}^{pT_{2}}}
\cos\left(  2t\right)  \text{ }\Delta_{3}(t)\text{ }f_{4}\left(
t,0,\Delta_{4}(t)\right)  dt,
\end{array}
\label{e3}%
\end{equation}
with
\[%
\begin{array}
[c]{l}%
\Delta_{1}(t)=X_{0}\cos\left(  \sqrt{3}t\right)  +\dfrac{Y_{0}}{\sqrt{3}}%
\sin\left(  \sqrt{3}t\right)  ,\vspace{0.2cm}\\
\Delta_{2}(t)=Y_{0}\cos\left(  \sqrt{3}t\right)  -\sqrt{3}X_{0}\sin\left(
\sqrt{3}t\right)  ,\vspace{0.2cm}\\
\Delta_{3}(t)=Z_{0}\cos\left(  2t\right)  +\dfrac{W_{0}}{2}\sin\left(
2t\right)  ,\vspace{0.2cm}\\
\Delta_{4}(t)=W_{0}\cos\left(  2t\right)  -2Z_{0}\sin\left(  2t\right)  .
\end{array}
\]

A zero $(X_{0}^{\ast},Y_{0}^{\ast})$ of the nonlinear system
\begin{equation}
\mathcal{F}_{1}(X_{0},Y_{0})=0,\quad\mathcal{F}_{2}(X_{0},Y_{0})=0, \label{e4}%
\end{equation}
such that
\[
\det\left(  \left.  \dfrac{\partial(\mathcal{F}_{1},\mathcal{F}_{2})}%
{\partial(X_{0},Y_{0})}\right\vert _{(X_{0},Y_{0})=(X_{0}^{\ast},Y_{0}^{\ast
})}\right)  \neq0,
\]
is called a \textit{simple zero} of system \eqref{e4}. Similarly, a zero
$(Z_{0}^{\ast},W_{0}^{\ast})$ of the nonlinear system
\begin{equation}
\mathcal{G}_{1}(Z_{0},W_{0})=0,\quad\mathcal{G}_{2}(Z_{0},W_{0})=0,
\label{e4bis}%
\end{equation}
such that
\[
\det\left(  \left.  \dfrac{\partial(\mathcal{G}_{1},\mathcal{G}_{2})}%
{\partial(Z_{0},W_{0})}\right\vert _{(Z_{0},W_{0})=(Z_{0}^{\ast},W_{0}^{\ast
})}\right)  \neq0,
\]
is called a \textit{simple zero} of system \eqref{e4bis}.

Our main results on the periodic solutions of the perturbed dumbbell satellite
\eqref{e2} are the following.

\begin{theorem}
\label{t1} Assume that the function $F_{1}^{\ast}$ and $F_{2}^{\ast}$ of the
perturbed dumbbell satellite with equations of motion (\ref{dumb}) are
periodic in $t$ of period $pT_{1}/q$ with $p$ and $q$ positive integers
relatively prime. Then, for $\varepsilon\neq0$ sufficiently small and for every
simple zero $(X_{0}^{\ast},Y_{0}^{\ast})\neq(0,0)$ of the nonlinear system
\eqref{e4}, the perturbed dumbbell satellite (\ref{dumb}) has a periodic
solution $\left(  \theta(t,\varepsilon),\dfrac{d\theta}{dt}(t,\varepsilon
),\phi(t,\varepsilon),\dfrac{d\phi}{dt}(t,\varepsilon)\right)  $ with%
\[
\lim_{\varepsilon\rightarrow0}\text{ }\left(  \theta(0,\varepsilon
),\dfrac{d\theta}{dt}(0,\varepsilon),\phi(0,\varepsilon),\dfrac{d\phi}%
{dt}(0,\varepsilon)\right)  =(X_{0}^{\ast},Y_{0}^{\ast},0,0).
\]

\end{theorem}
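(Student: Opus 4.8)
The plan is to recast \eqref{e2} as a perturbation of two uncoupled harmonic oscillators, pass to variation-of-parameters coordinates so that the unperturbed flow becomes trivial, and then locate $pT_1$-periodic orbits as zeros of a displacement map whose leading term is governed by the averaged functions in \eqref{e3}. Concretely, I would first write \eqref{e2} as a first-order system for $\mathbf{u}=(x,\dot x,y,\dot y)$, namely $\dot{\mathbf{u}}=A\mathbf{u}+\varepsilon(0,F_1,0,F_2)+\varepsilon^2(\cdots)$, with $A$ block-diagonal carrying the blocks of eigenvalues $\pm\sqrt3\,i$ and $\pm2\,i$. Introducing the variation-of-parameters coordinates $(X,Y,Z,W)$ through $x=\Delta_1$, $\dot x=\Delta_2$, $y=\Delta_3$, $\dot y=\Delta_4$ converts the system to $\dot X,\dot Y,\dot Z,\dot W=\varepsilon(\cdots)+\varepsilon^2(\cdots)$, so that $(X,Y,Z,W)$ are constant along the unperturbed motion. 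Solving the $2\times2$ linear system coming from the standard constraint of the method shows that, restricted to the slice $Z=W=0$, the $\varepsilon$-order parts of $\dot X$ and $\dot Y$ coincide, up to nonzero constant factors, with $-\mathcal{F}_1$ and $\mathcal{F}_2$ after averaging over $[0,pT_1]$.

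Next I would form the displacement map over one period $pT_1$. Because $pT_1=p\,(2\pi/\sqrt3)$ is an integer multiple of $T_1$ but not of $T_2=\pi$, the monodromy matrix $e^{A\,pT_1}$ is the identity on the $(X,Y)$ block and a rotation $R$ with $R-I$ invertible on the $(Z,W)$ block: the rotation angle $2\,pT_1=4p\pi/\sqrt3$ is not a multiple of $2\pi$, since $\sqrt3$ is irrational. The periodicity condition $\mathbf{u}(pT_1)=\mathbf{u}(0)$ therefore splits. On $(Z,W)$ it reads $(R-I)(Z_0,W_0)+\varepsilon(\cdots)=0$, which by invertibility of $R-I$ and the implicit function theorem has, for each small $\varepsilon$ and each $(X_0,Y_0)$ near $(X_0^\ast,Y_0^\ast)$, a unique solution $(Z_0,W_0)=O(\varepsilon)$; this is precisely what forces the limit value $(Z_0,W_0)\to(0,0)$. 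Substituting this back, the remaining $(X,Y)$ equations reduce at leading order to $\mathcal{F}_1(X_0,Y_0)=0$ and $\mathcal{F}_2(X_0,Y_0)=0$.

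Finally I would apply the implicit function theorem (equivalently, the averaging and Brouwer-degree result of \cite{BFL}) to the reduced two-dimensional bifurcation equations. Since $(X_0^\ast,Y_0^\ast)$ is a simple zero of \eqref{e4}, the Jacobian of the leading term is nonzero, so the full displacement map has a branch of zeros $(X_0(\varepsilon),Y_0(\varepsilon))\to(X_0^\ast,Y_0^\ast)$ for $\varepsilon\neq0$ small. This yields a $pT_1$-periodic solution of \eqref{e2} whose initial condition tends to $(X_0^\ast,Y_0^\ast,0,0)$; reverting the coordinates $x=\theta$, $y=\phi$ gives the asserted periodic solution of \eqref{dumb}, which is nontrivial because $(X_0^\ast,Y_0^\ast)\neq(0,0)$.

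The main obstacle is exactly the two-frequency resonance structure: since $\sqrt3$ and $2$ are incommensurate, the transformed system is \emph{not} $pT_1$-periodic in $t$, so one cannot apply the standard first-order averaging theorem to all four variables simultaneously. The crux is the Lyapunov--Schmidt-type splitting into the resonant block $(X,Y)$, where $e^{A\,pT_1}=I$ and the bifurcation function \eqref{e3} is genuinely needed, and the non-resonant block $(Z,W)$, where $e^{A\,pT_1}-I$ is invertible and the implicit function theorem applies directly. Verifying this block structure of the monodromy is the key step: it both justifies reducing the averaging to two dimensions and explains why the surviving orbit collapses onto the plane $y=\dot y=0$.
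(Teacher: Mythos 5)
Your argument is correct, and at its core it coincides with the paper's proof: the same first--order system in $(X,Y,Z,W)$, the same splitting into a resonant $(X,Y)$ block and a non--resonant $(Z,W)$ block, and the same bifurcation functions \eqref{e3}. The difference is one of packaging. The paper never performs the Lyapunov--Schmidt reduction itself: it invokes the Malkin--Roseau averaging theorem (Theorem \ref{tt} of the appendix, proved in \cite{BFL}) and merely verifies its hypotheses --- the manifold $\mathcal{Z}=\{(X_{0},Y_{0},0,0)\}$ of $pT_{1}$--periodic solutions, the requirement that $M^{-1}(0)-M^{-1}(pT_{1})$ have zero upper--right block and invertible lower--right block (determinant $4\sin^{2}\!\left(\tfrac{2\sqrt{3}p\pi}{3}\right)$), and the computation of the averaged function \eqref{eq:7}, which yields \eqref{e3}. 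Your displacement--map argument is essentially the proof of that cited theorem specialized to this system: your monodromy block structure \emph{is} hypothesis (ii), and your implicit--function--theorem step on the $(Z,W)$ block followed by reduction to two bifurcation equations is the argument of \cite{BFL}. Two details in your version are actually sharper than the paper's text: you justify the nonvanishing of the determinant by the irrationality of $\sqrt{3}$ (the paper asserts it without comment), and your sign $-\mathcal{F}_{1}$ for the first reduced equation is the correct one --- the inverse of the displayed fundamental matrix has $(1,2)$ entry $-\sin(\sqrt{3}t)/\sqrt{3}$, so the paper's integrand carries a harmless sign slip, immaterial for zero sets and Jacobian nondegeneracy. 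What your route buys is self--containedness and a transparent explanation of why four--dimensional averaging in standard form is unavailable (after variation of parameters the system is only quasi--periodic in $t$, since $T_{1}$ and $T_{2}$ are incommensurable). What the paper's route buys is brevity, plus the smoothness and uniformity bookkeeping --- factoring $\varepsilon$ out of the displacement map and applying the implicit function theorem uniformly over the annulus $V$ --- which comes for free from the cited theorem and which your outline glosses over, though those steps are routine given that the $F_{i}$ are $\mathcal{C}^{2}$.
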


Theorem \ref{t1} is proved in section \ref{s3}. Its proof is based in the
averaging theory for computing periodic solutions, see the Appendix of the
paper.\smallskip

We provide an application of Theorem \ref{t1} in the following corollary,
which will be proved in section \ref{s4}.

\begin{corollary}
\label{c1} We consider the system (\ref{dumb}) with
\[%
\begin{array}
[c]{l}%
F_{1}^{\ast}\left(  t,\theta,\dfrac{d\theta}{dt},\phi,\dfrac{d\phi}%
{dt}\right)  =\sin\theta\left(  \dfrac{d\theta}{dt}\right)  ^{4}+\sin\phi
\sin\theta\left(  1-\left(  \dfrac{d\phi}{dt}\right)  ^{2}\right)  ,\medskip\\
F_{2}^{\ast}\left(  t,\theta,\dfrac{d\theta}{dt},\phi,\dfrac{d\phi}
{dt}\right)  =\cos\theta-\sin\left(  \sqrt{3}t\right)  \sin\theta
\dfrac{d\theta}{dt}-\sin\theta\left(  \dfrac{d\theta}{dt}\right)^{2}
\\ \qquad\qquad\qquad\quad\qquad-\cos\phi\left(  1-\left(  \dfrac{d\phi}{dt}\right)  ^{2}\right).
\end{array}
\]

\noindent Then, for $\varepsilon\neq0$ sufficiently small the system (\ref{dumb}) has one
periodic solution with
\[
\lim_{\varepsilon\rightarrow0}\text{ }\left(  \theta(0,\varepsilon
),\dfrac{d\theta}{dt}(0,\varepsilon),\phi(0,\varepsilon),\dfrac{d\phi}%
{dt}(0,\varepsilon)\right)  =\left(  \dfrac{\sqrt{3}}{3},0,0,0\right)  .
\]

\end{corollary}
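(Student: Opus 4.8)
The corollary is an instance of Theorem~\ref{t1}, so the plan is: (i) check that we are in the $T_1$-resonant regime and read off the resonance integers; (ii) extract from $F_1^{\ast}$ the single coefficient $f_1$ that feeds the averaged functions; (iii) evaluate $\mathcal{F}_1,\mathcal{F}_2$ in closed form; and (iv) exhibit $(\sqrt{3}/3,0)$ as a \emph{simple} zero and invoke the theorem.

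For (i), note that $F_1^{\ast}$ is independent of $t$ while $F_2^{\ast}$ depends on $t$ only through $\sin(\sqrt{3}\,t)$, whose period is $T_1=2\pi/\sqrt{3}$; thus both torques are $T_1$-periodic and Theorem~\ref{t1} applies with $p=q=1$, so the integrals in \eqref{e3} run over $[0,T_1]$. Since Theorem~\ref{t1} is governed by $\mathcal{F}_1,\mathcal{F}_2$, which are assembled from $f_1$ alone, the companion torque $F_2^{\ast}$ and the coefficients $f_2,f_3,f_4$ are irrelevant here. For (ii) I would linearize $F_1^{\ast}$ at the equilibrium and set
\[
f_1\!\left(t,\dfrac{dx}{dt},\dfrac{dy}{dt}\right)=\left.\dfrac{\partial F_1^{\ast}}{\partial\theta}\right|_{\theta=\phi=0},
\]
discarding the mixed term $\sin\phi\sin\theta\,(1-(d\phi/dt)^2)$, which is quadratic in $(\theta,\phi)$ and hence contributes to neither $f_1$ nor $f_2$.

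For (iii) I would insert $f_1(t,\Delta_2(t),0)$ into \eqref{e3} and evaluate the two integrals over $[0,T_1]$. The clean route is to rewrite the unperturbed orbit in amplitude--phase form,
\[
\Delta_1(t)=A\cos(\sqrt{3}\,t-\delta),\qquad \Delta_2(t)=-\sqrt{3}\,A\sin(\sqrt{3}\,t-\delta),
\]
where $A^2=X_0^2+Y_0^2/3$ and $\tan\delta=Y_0/(\sqrt{3}\,X_0)$. Each integrand then becomes a trigonometric polynomial in the single angle $\sqrt{3}\,t-\delta$, and the integrals are read off from the standard values of $\int_0^{2\pi}\cos^{2m}s\,\sin^{2n}s\,ds$ together with the vanishing of every odd-power average. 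This collapses $\mathcal{F}_1$ and $\mathcal{F}_2$ to explicit polynomials in $(X_0,Y_0)$.

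For (iv) I would solve $\mathcal{F}_1=\mathcal{F}_2=0$, single out the nontrivial root $(X_0^{\ast},Y_0^{\ast})=(\sqrt{3}/3,0)$, and verify
\[
\det\left(\left.\dfrac{\partial(\mathcal{F}_1,\mathcal{F}_2)}{\partial(X_0,Y_0)}\right|_{(\sqrt{3}/3,\,0)}\right)\neq0.
\]
As $(\sqrt{3}/3,0)\neq(0,0)$, Theorem~\ref{t1} then delivers, for every small $\varepsilon\neq0$, a periodic solution of \eqref{dumb} with initial data converging to $(\sqrt{3}/3,0,0,0)$, which is the claim. I expect the nondegeneracy check to be the crux: for a perturbation coefficient of this type the two averaged functions tend to share a common factor, so their zero set can degenerate into a curve rather than isolated points; confirming that the Jacobian at $(\sqrt{3}/3,0)$ is genuinely nonsingular---so that the zero is simple and Theorem~\ref{t1} is applicable---is the step carrying the real content, the integral bookkeeping being otherwise routine.
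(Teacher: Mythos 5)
Your outline reproduces the paper's strategy---identify $p=q=1$, extract $f_1$, evaluate \eqref{e3}, and hand a simple zero to Theorem \ref{t1}---but it breaks down at a step you did not flag, and the breakdown is fatal for the proposal as written. Executing your step (ii) on the printed $F_1^{\ast}$ gives $f_1(t,\dot x,\dot y)=(\dot x)^4$ (the mixed term indeed contributes nothing, both because it is quadratic in $(\theta,\phi)$ and because it is evaluated at $\phi=\dot\phi=0$). Feeding $f_1=(\dot x)^4$ into \eqref{e3} with your amplitude--phase substitution yields
\begin{equation*}
\mathcal{F}_1(X_0,Y_0)=\frac{Y_0\bigl(3X_0^2+Y_0^2\bigr)^2}{48},\qquad
\mathcal{F}_2(X_0,Y_0)=\frac{X_0\bigl(3X_0^2+Y_0^2\bigr)^2}{16},
\end{equation*}
whose only common zero is the origin; in particular $\mathcal{F}_2(\sqrt{3}/3,0)=\sqrt{3}/48\neq0$. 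So in step (iv) there is no nontrivial root to single out, Theorem \ref{t1} yields nothing, and the proposal cannot reach the stated conclusion. Your closing worry is also misplaced: the obstruction is not a degenerate Jacobian but an empty nontrivial zero set. A degree count already shows the mismatch: $f_1=(\dot x)^4$ forces $\mathcal{F}_1,\mathcal{F}_2$ to be homogeneous of degree $5$, while the averaged functions in the paper's proof of Corollary \ref{c1} mix degrees $2$ and $3$, so they cannot possibly arise from the printed $F_1^{\ast}$.

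The paper's own numbers reveal what happened: its printed $\mathcal{F}_2$ is exactly what \eqref{e3} produces from $f_1=-\sin(\sqrt{3}\,t)\,\dot x-(\dot x)^2$, which is the $\theta$--linearization of the \emph{printed} $F_2^{\ast}$ (the terms $-\sin(\sqrt{3}\,t)\sin\theta\,\dot\theta-\sin\theta\,\dot\theta^2$); this strongly suggests the two torques in the corollary statement are swapped or mislabeled relative to what the authors computed (their printed $\mathcal{F}_1$ carries a further typo). With that $f_1$ one finds, consistently,
\begin{equation*}
\mathcal{F}_1=\frac{Y_0\bigl(2\sqrt{3}\,X_0-3X_0^2-Y_0^2\bigr)}{24},\qquad
\mathcal{F}_2=\frac{\sqrt{3}\bigl(3X_0^2-Y_0^2\bigr)-3X_0\bigl(3X_0^2+Y_0^2\bigr)}{24},
\end{equation*}
whose unique nontrivial common zero is $(\sqrt{3}/3,0)$: on $Y_0=0$ the balance $3\sqrt{3}X_0^2=9X_0^3$ forces $X_0=\sqrt{3}/3$, while on the circle $3X_0^2+Y_0^2=2\sqrt{3}X_0$ one gets $\mathcal{F}_2=-X_0/4\neq0$. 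The Jacobian there equals $1/192\neq0$, so Theorem \ref{t1} applies and the advertised limit point follows. In short: your method is the right one and your integral technique is sound, but a faithful execution on the corollary as printed produces averaged equations with no admissible zero, so the method detects no periodic solution at all; the missing step is to notice this inconsistency and take the averaging coefficient from the torque where the resonant factor $\sin(\sqrt{3}\,t)$ actually lives, namely the printed $F_2^{\ast}$.
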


Similarly we obtain the following result.

\begin{theorem}
\label{t1a} Assume that the functions $F_{1}^{\ast}$ and $F_{2}^{\ast}$ of the
perturbed dumbbell satellite with equations of motion (\ref{dumb}) are
periodic in $t$ of period $pT_{2}/q$ with $p$ and $q$ positive integers,
relatively prime. Then for $\varepsilon\neq0$ sufficiently small and for every
simple zero $(Z_{0}^{\ast},W_{0}^{\ast})\neq(0,0)$ of the nonlinear system
\eqref{e4bis}, the perturbed dumbbell satellite (\ref{dumb}) has a
periodic solution $\left(  \theta(t,\varepsilon),\dfrac{d\theta}%
{dt}(t,\varepsilon),\phi(t,\varepsilon),\dfrac{d\phi}{dt}(t,\varepsilon
)\right)  $ with%
\[
\lim_{\varepsilon\rightarrow0}\text{ }\left(  \theta(0,\varepsilon
),\dfrac{d\theta}{dt}(0,\varepsilon),\phi(0,\varepsilon),\dfrac{d\phi}%
{dt}(0,\varepsilon)\right)  =(0,0,Z_{0}^{\ast},W_{0}^{\ast}).
\]

\end{theorem}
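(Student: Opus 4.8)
The plan is to follow the proof of Theorem~\ref{t1} almost verbatim, interchanging the two normal modes: instead of continuing the orbits of the plane with eigenvalues $\pm\sqrt3\,i$ and period $T_1$, we continue the orbits of the plane with eigenvalues $\pm2\,i$ and period $T_2=\pi$. First I would rewrite \eqref{e2} as the first order system
\[
x'=u,\quad u'=-3x+\varepsilon F_1+\varepsilon^2R_1,\quad y'=v,\quad v'=-4y+\varepsilon F_2+\varepsilon^2R_2,
\]
and introduce amplitude variables $(X,Y,Z,W)$ by the variation of constants dictated by the unperturbed flow, i.e. by setting $x,u,y,v$ equal to $\Delta_1,\Delta_2,\Delta_3,\Delta_4$ with the constants $(X_0,Y_0,Z_0,W_0)$ replaced by time-dependent amplitudes $(X,Y,Z,W)$. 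Since the change of variables has unit determinant on each plane, this brings the system to the standard form
\[
X'=-\tfrac1{\sqrt3}\sin(\sqrt3 t)\,(\varepsilon F_1+\varepsilon^2R_1),\quad Y'=\cos(\sqrt3 t)\,(\varepsilon F_1+\varepsilon^2R_1),
\]
\[
Z'=-\tfrac12\sin(2t)\,(\varepsilon F_2+\varepsilon^2R_2),\quad W'=\cos(2t)\,(\varepsilon F_2+\varepsilon^2R_2),
\]
with $F_1,F_2,R_1,R_2$ evaluated along the change of variables.

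Next, since $F_1^\ast,F_2^\ast$ — hence $F_1,F_2$ — are $pT_2/q$-periodic and $T_2=\pi$, the relevant period for the sought solutions is $pT_2=p\pi$. The decisive structural fact, and the one I expect to be the main obstacle to set up rigorously, is a non-resonance observation in the original coordinates. The time-$p\pi$ map $\Pi_0$ of the unperturbed flow \eqref{e1a} advances the phase of the frequency-$2$ oscillator by $2p\pi\in2\pi\Z$, so it is the identity on the $(y,v)$-plane; on the $(x,u)$-plane it advances the phase of the frequency-$\sqrt3$ oscillator by $\sqrt3\,p\pi$, which is \emph{not} in $2\pi\Z$ because $\sqrt3$ is irrational. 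Hence the fixed-point set of $\Pi_0$ is exactly the plane $\{X=Y=0\}$, i.e. the two-parameter family of $p\pi$-periodic orbits $y=\Delta_3(t)$ that we wish to continue, and $\mathrm{Id}-D\Pi_0$ is invertible in the transverse $(X,Y)$ directions.

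This is precisely the configuration handled by the averaging theorem of the Appendix: the non-degenerate directions $(X,Y)$ are eliminated by the implicit function theorem, forcing $X(\varepsilon),Y(\varepsilon)=O(\varepsilon)\to0$, and the bifurcation is governed by the first order average of $(Z',W')$ over $[0,p\pi]$, restricted to $X_0=Y_0=0$. On that plane $\Delta_1=\Delta_2=0$, so $x\equiv u\equiv0$ and $F_2=f_3x+f_4y$ collapses to $f_4(t,0,\Delta_4(t))\,\Delta_3(t)$; averaging the two displayed equations for $Z',W'$ then yields exactly $-\tfrac12\mathcal{G}_1(Z_0,W_0)$ and $\tfrac12\mathcal{G}_2(Z_0,W_0)$, the functions in \eqref{e3}.

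Consequently a simple zero $(Z_0^\ast,W_0^\ast)\neq(0,0)$ of \eqref{e4bis} is a simple zero of this averaged system — its Jacobian equals $-\tfrac14\det\!\big(\partial(\mathcal{G}_1,\mathcal{G}_2)/\partial(Z_0,W_0)\big)\neq0$ — so the averaging theorem produces a $p\pi$-periodic solution whose initial amplitudes tend to $(0,0,Z_0^\ast,W_0^\ast)$ as $\varepsilon\to0$. Undoing the variation of constants and the linearization about $\theta=\phi=0$ gives the asserted periodic solution of \eqref{dumb} with the stated limit. The only genuinely delicate point is the decoupling described above: showing that the incommensurable frequency-$\sqrt3$ oscillator neither obstructs the continuation nor enters the bifurcation equations, which is exactly where the non-resonance of $\sqrt3$ and $2$ is used; the remaining steps reduce to the same elementary trigonometric integrals as in Theorem~\ref{t1}.
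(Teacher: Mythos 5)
Your proposal is correct and follows essentially the same route as the paper: both rest on Theorem \ref{tt} of the Appendix, verify its hypothesis (ii) through the non-resonance $\sqrt{3}\,p\pi\notin 2\pi\mathbb{Z}$ (your invertibility of $\mathrm{Id}-D\Pi_{0}$ on the transverse $(X,Y)$-directions is exactly the paper's nonvanishing determinant of the corresponding block of $M^{-1}(0)-M^{-1}(pT_{2})$), and identify the bifurcation function with $(\mathcal{G}_{1},\mathcal{G}_{2})$ up to the harmless constant factors $-\tfrac{1}{2}$ and $\tfrac{1}{2}$. Your variation-of-constants amplitude equations are precisely the integrand $M_{\mathbf{z}_{\alpha}}^{-1}(t)G_{1}(t,\mathbf{x}(t,\mathbf{z}_{\alpha}))$ appearing in \eqref{eq:7}, so the two presentations coincide in substance.
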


The proof of Theorem \ref{t1a} is analogous 
Theorem \ref{t1}.

\smallskip

On the other hand, we provide an application of Theorem \ref{t1a} in the next
corollary, which will be proved in section \ref{s4}.

\begin{corollary}
\label{c2} We consider the system (\ref{dumb}) with%
\[%
\begin{array}
[c]{l}%
F_{1}^{\ast}\left(  t,\theta,\dfrac{d\theta}{dt},\phi,\dfrac{d\phi}%
{dt}\right)  =\sin\phi\sin\theta\dfrac{d\phi}{dt}+\sin\phi+\sin(2t)\sin
\phi\left(  1-\dfrac{d\phi}{dt}\right)  \dfrac{d\phi}{dt},\medskip\\
F_{2}^{\ast}\left(  t,\theta,\dfrac{d\theta}{dt},\phi,\dfrac{d\phi}%
{dt}\right)  =\sin\phi-\sin(2t)\sin\phi\dfrac{d\phi}{dt}-\sin\phi\left(
\dfrac{d\phi}{dt}\right)  ^{2}.
\end{array}
\]
Then, for $\varepsilon\neq0$ sufficiently small, the system (\ref{dumb}) has
three periodic solutions with%
\[%
\begin{array}
[c]{l}%
\lim\limits_{\varepsilon\rightarrow0}\text{ }\left(  \theta(0,\varepsilon
),\dfrac{d\theta}{dt}(0,\varepsilon),\phi(0,\varepsilon),\dfrac{d\phi}%
{dt}(0,\varepsilon)\right)  =\left(  0,0,1,\dfrac{2\sqrt{3}}{3}\right)
,\medskip\\
\lim\limits_{\varepsilon\rightarrow0}\text{ }\left(  \theta(0,\varepsilon
),\dfrac{d\theta}{dt}(0,\varepsilon),\phi(0,\varepsilon),\dfrac{d\phi}%
{dt}(0,\varepsilon)\right)  =\left(  0,0,\dfrac{1-\sqrt{17}}{4},0\right)
,\medskip\\
\lim\limits_{\varepsilon\rightarrow0}\text{ }\left(  \theta(0,\varepsilon
),\dfrac{d\theta}{dt}(0,\varepsilon),\phi(0,\varepsilon),\dfrac{d\phi}%
{dt}(0,\varepsilon)\right)  =\left(  0,0,\dfrac{1+\sqrt{17}}{4},0\right)  .
\end{array}
\]

\end{corollary}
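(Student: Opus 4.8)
The plan is to reduce Corollary \ref{c2} to a direct application of Theorem \ref{t1a}, so that the entire task becomes the explicit location of the simple zeros of the system \eqref{e4bis}. First I would note that both $F_1^\ast$ and $F_2^\ast$ depend on $t$ only through $\sin(2t)$, hence they are periodic of period $\pi=T_2$; the resonance is therefore $p\!:\!q$ with $p=q=1$, and the hypothesis of Theorem \ref{t1a} on the period is satisfied. Because the limiting data in Theorem \ref{t1a} is $(0,0,Z_0^\ast,W_0^\ast)$, only the precession block is active, and among the four coefficient functions $f_i$ only $f_4$ enters the averaged system through $\mathcal{G}_1,\mathcal{G}_2$.

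Next I would extract $f_4$. Writing $F_2^\ast=\sin\phi\,(1-\sin(2t)\,\dot\phi-\dot\phi^{2})$ and taking the linear coefficient in the precession angle at $\theta=\phi=0$ (where $\sin\phi$ contributes $1$), one reads off $f_4(t,0,\dot y)=1-\sin(2t)\,\dot y-\dot y^{2}$, while $f_3\equiv 0$. I would then substitute $\dot y=\Delta_4(t)=W_0\cos(2t)-2Z_0\sin(2t)$ together with $\Delta_3(t)=Z_0\cos(2t)+\tfrac{W_0}{2}\sin(2t)$ into the defining integrals \eqref{e3} with $p=1$ and $T_2=\pi$. Expanding each integrand into monomials in $\cos(2t),\sin(2t)$ and discarding those in which either factor occurs to an odd power (they integrate to zero over $[0,\pi]$), while using $\int_0^\pi\sin^2=\int_0^\pi\cos^2=\pi/2$, $\int_0^\pi\sin^2\cos^2=\pi/8$ and $\int_0^\pi\sin^4=\int_0^\pi\cos^4=3\pi/8$, reduces $\mathcal{G}_1$ and $\mathcal{G}_2$ to explicit polynomials in $(Z_0,W_0)$.

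I would then solve $\mathcal{G}_1=\mathcal{G}_2=0$. The key structural observation is that $\mathcal{G}_1$ carries an overall factor $W_0$, which splits the analysis into two branches. On the branch $W_0=0$, the equation $\mathcal{G}_2(Z_0,0)=0$ factors as $Z_0$ times a quadratic whose two nonzero roots are $Z_0=\tfrac{1\pm\sqrt{17}}{4}$, producing the solutions $(0,0,\tfrac{1-\sqrt{17}}{4},0)$ and $(0,0,\tfrac{1+\sqrt{17}}{4},0)$. On the branch $W_0\neq0$, the vanishing of the remaining factor of $\mathcal{G}_1$ expresses $W_0^{2}$ as a function of $Z_0$; substituting this relation into $\mathcal{G}_2=0$ should leave a single admissible real pair, namely $(0,0,1,\tfrac{2\sqrt3}{3})$. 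Removing the origin then leaves exactly the three zeros claimed.

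Finally, for each of the three zeros I would evaluate the $2\times2$ determinant $\partial(\mathcal{G}_1,\mathcal{G}_2)/\partial(Z_0,W_0)$ and verify it is nonzero, so that each is a \emph{simple} zero of \eqref{e4bis}; Theorem \ref{t1a} then yields, for every $\varepsilon\neq0$ sufficiently small, the three periodic solutions of \eqref{dumb} with the stated limits. The difficulty here is not conceptual but lies in the bookkeeping: keeping every surviving trigonometric term, solving the nonlinear polynomial system completely without introducing spurious roots or missing one, and in particular controlling the $W_0\neq0$ branch, where one must confirm that the relation imposed by $\mathcal{G}_1=0$ is compatible with $\mathcal{G}_2=0$ for a genuine \emph{real} pair and that the Jacobian does not degenerate there. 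That compatibility check on the $W_0\neq0$ branch, together with the three Jacobian evaluations, is the step I expect to be the main obstacle.
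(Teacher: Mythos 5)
Your proposal follows the same route as the paper's own proof: reduce to Theorem \ref{t1a}, read off $f_3\equiv 0$ and $f_4(t,\dot x,\dot y)=1-\sin(2t)\,\dot y-\dot y^{2}$, evaluate $\mathcal{G}_1,\mathcal{G}_2$ from \eqref{e3} with $p=q=1$, $T_2=\pi$, and locate the simple zeros. Your $W_0=0$ branch is also handled exactly as in the paper, giving $Z_0=\tfrac{1\pm\sqrt{17}}{4}$. The genuine gaps are concentrated on the branch $W_0\neq 0$ --- precisely the part you deferred as ``bookkeeping''.

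First, a structural point: for this perturbation $\mathcal{G}_1$ is odd in $W_0$ and $\mathcal{G}_2$ is even in $W_0$ (true both for the paper's stated pair and for the faithful evaluation below), so zeros with $W_0\neq0$ occur in pairs $(Z_0,\pm W_0)$; that branch can never produce ``a single admissible real pair''. The paper finds the pair $(1,\pm\tfrac{2\sqrt3}{3})$ and then needs the extra observation --- absent from your plan --- that two zeros differing only in the sign of $W_0$ are initial conditions of one and the same unperturbed orbit (both lie on the ellipse $4Z^{2}+W^{2}=\mathrm{const}$ swept by \eqref{d4}), which is how four simple zeros yield \emph{three} periodic solutions; without this identification your count would be four. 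Second, and more seriously, the deferred computation does not come out as you assume. Evaluating \eqref{e3} literally with the above $f_4$ gives $\mathcal{G}_1=\tfrac{W_0}{16}\left(4+4Z_0-4Z_0^{2}-W_0^{2}\right)$ and $\mathcal{G}_2=\tfrac{1}{8}\left(4Z_0(2+Z_0-2Z_0^{2})-W_0^{2}(1+2Z_0)\right)$. The latter agrees with the paper's $\mathcal{G}_2$ up to an irrelevant constant factor; the former has exactly the structure you predicted (the cofactor of $W_0$ gives $W_0^{2}=4+4Z_0-4Z_0^{2}$), but substituting that relation into $\mathcal{G}_2=0$ leaves $Z_0+1=0$, hence $Z_0=-1$ and $W_0^{2}=-4<0$: the branch is empty over the reals, and the solution with limit $(0,0,1,\tfrac{2\sqrt3}{3})$ is never obtained. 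The paper instead works with $\mathcal{G}_1=\tfrac{(Z_0-1)W_0}{8}$, which is \emph{not} a constant multiple of the faithful evaluation of \eqref{e3} (already the terms linear in $W_0$ disagree in sign, and the cubic terms $Z_0^{2}W_0$, $W_0^{3}$ are missing); under that form the branch pins $Z_0=1$ and produces the two zeros above. So the compatibility check you yourself flagged as the main obstacle is exactly where the argument breaks: carried out literally, your plan establishes only the two $W_0=0$ solutions and contradicts the first limit in the statement, and to reach the stated conclusion one must first resolve the discrepancy between \eqref{e3} applied to this $F_2^{\ast}$ and the $\mathcal{G}_1$ used in the paper, not merely push through trigonometric integrals.
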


\begin{remark}
The momenta apply to a rigid dumbbell satellite in a circular orbit under the gravitational torque
of a central Newtonian force field are, in general, functions of the Euler's angle and some maps which depend of the independent variable $t$ (time). The applications presented in Corollaries \ref{c1} and \ref{c2} represent some usual momenta which model the solar radiation obtained by the solar panels located at the satellite, see for more information \cite{Ma}.
\end{remark}

\section{Proof of Theorems \ref{t1} and \ref{t1a}}

\label{s3}

Introducing the variables $(X,Y,Z,W)=\left(  x,\dfrac{dx}{dt},y,\dfrac{dy}%
{dt}\right)  $ we write the differential system of the perturbed dumbbell
satellite \eqref{e2} as a first--order differential system defined in
$\mathbb{R}^{4}$, as follows:
\begin{equation}%
\begin{array}
[c]{l}%
\dfrac{dX}{dt}=Y,\medskip\\
\dfrac{dY}{dt}=-3X+\varepsilon F_{1}\left(  t,X,Y,Z,W\right)  +\varepsilon
^{2}R_{1}\left(  t,X,Y,Z,W,\varepsilon\right)  ,\medskip\\
\dfrac{dZ}{dt}=W,\medskip\\
\dfrac{dW}{dt}=-4Z+\varepsilon F_{2}\left(  t,X,Y,Z,W\right)  +\varepsilon
^{2}R_{2}\left(  t,X,Y,Z,W,\varepsilon\right)  .
\end{array}
\label{d1}%
\end{equation}
System \eqref{d1} with $\varepsilon=0$ is equivalent to the unperturbed
dumbbell satellite system \eqref{e1a}. On the other hand, the periodic orbits
of the unperturbed system are described in the following lemma.\smallskip

\begin{lemma}
\label{L1} The periodic solutions of the \textit{unperturbed system} with
$\varepsilon=0$ are
\begin{equation}%
\begin{array}
[c]{l}%
X(t)=X_{0}\cos\left(  \sqrt{3}t\right)  +\dfrac{Y_{0}}{\sqrt{3}}\sin\left(
\sqrt{3}t\right),\vspace{0.2cm}\\
Y(t)=Y_{0}\cos\left(  \sqrt{3}t\right)  -\sqrt{3}X_{0}\sin\left(  \sqrt
{3}t\right),\\
Z(t)=0,\\
W(t)=0,
\end{array}
\label{d3}%
\end{equation}
of period $T_{1}$, and
\begin{equation}%
\begin{array}
[c]{l}%
X(t)=0,\\
Y(t)=0,\\
Z(t)=Z_{0}\cos\left(  2t\right)  +\dfrac{W_{0}}{2}\sin\left(  2t\right),\vspace{0.2cm}\\
W(t)=W_{0}\cos\left(  2t\right)  -2Z_{0}\sin\left(  2t\right),
\end{array}
\label{d4}%
\end{equation}
of period $T_{2}$.
\end{lemma}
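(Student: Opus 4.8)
The plan is to verify directly that the stated formulas solve the uncoupled linear system obtained from \eqref{d1} by setting $\varepsilon=0$, and then to confirm that these exhaust all periodic solutions. When $\varepsilon=0$, system \eqref{d1} decouples into two independent planar linear systems: the $(X,Y)$-subsystem $\dot X=Y$, $\dot Y=-3X$, and the $(Z,W)$-subsystem $\dot Z=W$, $\dot W=-4Z$. Each is a harmonic oscillator, so the general solution of the first is a linear combination of $\cos(\sqrt{3}\,t)$ and $\sin(\sqrt{3}\,t)$ with angular frequency $\sqrt{3}$, and the general solution of the second uses frequency $2$.

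First I would treat the $(X,Y)$-block. Differentiating $X(t)$ in \eqref{d3} gives $\dot X(t)=-\sqrt{3}X_{0}\sin(\sqrt{3}\,t)+Y_{0}\cos(\sqrt{3}\,t)$, which is exactly $Y(t)$ as written, confirming the first equation $\dot X=Y$. Differentiating $Y(t)$ gives $\dot Y(t)=-\sqrt{3}Y_{0}\sin(\sqrt{3}\,t)-3X_{0}\cos(\sqrt{3}\,t)=-3\bigl(X_{0}\cos(\sqrt{3}\,t)+\tfrac{Y_{0}}{\sqrt{3}}\sin(\sqrt{3}\,t)\bigr)=-3X(t)$, confirming $\dot Y=-3X$. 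Since $Z\equiv W\equiv0$ trivially satisfies $\dot Z=W$ and $\dot W=-4Z$, the quadruple \eqref{d3} is a genuine solution, and its components are periodic of period $T_{1}=2\pi/\sqrt{3}$. The constants $(X_{0},Y_{0})$ are precisely the initial data $(X(0),Y(0))$, so as they range over $\mathbb{R}^{2}$ one obtains every orbit lying in the plane $\{Z=W=0\}$. An identical computation handles \eqref{d4}: differentiating $Z$ and $W$ yields $\dot Z=W$ and $\dot W=-4Z$ with period $T_{2}=\pi$.

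It then remains to argue that no other periodic solutions exist, which is the one place requiring a word beyond routine differentiation. Because the full $\varepsilon=0$ flow is the product of the two planar linear flows, a solution is periodic if and only if each block is periodic. For a planar center $\dot u=v$, $\dot v=-\omega^{2}u$, every nonzero orbit is periodic with the single period $2\pi/\omega$, while the only equilibrium is the origin; thus within each block the nonconstant periodic orbits form the punctured plane and all share one period. A solution of the coupled four-dimensional system is periodic precisely when both blocks complete an integer number of their respective periods simultaneously. Since the frequencies $\sqrt{3}$ and $2$ are incommensurable (their ratio $\sqrt{3}/2$ is irrational), a solution active in both planes is quasi-periodic rather than periodic; hence every periodic solution must have one of the two blocks identically zero. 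This forces the solution into one of the two invariant planes, giving exactly the families \eqref{d3} and \eqref{d4}.

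The main obstacle, such as it is, is not the verification but the completeness claim: one must invoke the incommensurability of $\sqrt{3}$ and $2$ to rule out solutions with both oscillatory blocks nonzero, since otherwise a naive reader might expect a larger family of periodic orbits. Everything else is a direct substitution, so I would present the differentiations compactly and devote the explanatory sentence to why the two invariant planes capture all periodic motion.
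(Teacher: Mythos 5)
Your proof is correct. For comparison: the paper's own proof of Lemma \ref{L1} is a single sentence --- ``Since the unperturbed system is a linear differential system, the proof is routine'' --- so you have not diverged from the paper's approach so much as supplied the details it declines to write out. The verification by differentiation is exactly the routine part the authors had in mind. The one substantive addition in your write-up is the completeness argument: a periodic solution of the full four-dimensional system cannot have both planar blocks active, because that would force a common period $T$ with $T=2\pi m/\sqrt{3}=\pi n$ for integers $m,n$, contradicting the irrationality of $\sqrt{3}/2$. This incommensurability observation is genuinely needed for the lemma to assert that \eqref{d3} and \eqref{d4} are \emph{the} periodic solutions rather than merely \emph{some} periodic solutions, and the paper leaves it implicit (it is hinted at only in the introduction, where the two eigenvalue pairs $\pm\sqrt{3}\,i$, $\pm 2\,i$ and the two invariant planes are described). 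So your proposal is a faithful, complete version of the paper's argument, and you correctly identified where the only non-trivial content lies.
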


\begin{proof}
Since the unperturbed system is a linear differential system, the proof
is routine.
\end{proof}

\begin{proof}
[Proof of Theorem \ref{t1}]Assume that the function $F_{1}$ and $F_{2}$ of the
dumbbell satellite system with equations of motion \eqref{e2} are periodic in
$t$ of period $pT_{1}/q$ with $p$ and $q$ positive integers, relatively prime.
Then system \eqref{e2} is periodic in $t$ with period $pT_{1}$.

We shall apply Theorem \ref{tt} of the appendix to the differential system
\eqref{d1}. We note that system \eqref{d1} can be written as system
\eqref{eq:4}, taking
\[
\mathbf{x}=\left(
\begin{array}
[c]{c}%
X\vspace{0.2cm}\\
Y\vspace{0.2cm}\\
Z\vspace{0.2cm}\\
W
\end{array}
\right)  , G_{0}(t,\mathbf{x})=\left(
\begin{array}
[c]{c}%
\,Y\vspace{0.2cm}\\
-3\,X\vspace{0.2cm}\\
\,W\vspace{0.2cm}\\
-4Z
\end{array}
\right),
\]

$$
G_{1}(t,\mathbf{x})=\left(
\begin{array}
[c]{c}%
0\vspace{0.2cm}\\
F_{1}\left(  t,X,Y,Z,W\right)  \vspace{0.2cm}\\
0\vspace{0.2cm}\\
F_{2}\left(  t,X,Y,Z,W\right)
\end{array}
\right)$$
and
$$
G_{2}(t,\mathbf{x,\varepsilon})=\left(
\begin{array}
[c]{c}%
0\vspace{0.2cm}\\
\varepsilon^{2}R_{1}\left(  t,X,Y,Z,W,\varepsilon\right)  \vspace{0.2cm}\\
0\vspace{0.2cm}\\
\varepsilon^{2}R_{2}\left(  t,X,Y,Z,W,\varepsilon\right)
\end{array}
\right).$$

We shall study which periodic solutions \eqref{d3} of the unperturbed system
corresponding to the system \eqref{d1} with $\varepsilon=0$ can be continued
to periodic solutions of the unperturbed system for $\varepsilon\neq0$
sufficiently small.

\smallskip

We shall describe the different elements which appear in the statement of
Theorem \ref{tt}. Thus we have that $\Omega=\mathbb{R}^{4}$, $k=2$ and $n=4$.
Let $r_{1}>0$ be arbitrarily small and let $r_{2}>0$ be arbitrarily large. We
take the open and bounded subset $V$ of the plane $Z=W=0$ as
\[
V=\{(X_{0},Y_{0},0,0)\in\mathbb{R}^{4}:r_{1}<\sqrt{X_{0}^{2}+Y_{0}^{2}}%
<r_{2}\}.
\]
As usual $\mbox{\normalfont{Cl}}(V)$ denotes the closure of $V$. If
$\alpha=(X_{0},Y_{0})$, then we can identify $V$ with the set
\[
\{\alpha\in\mathbb{R}^{2}:r_{1}<||\alpha||\text{ }<r_{2}\},
\]
here $||\cdot||$ denotes the Euclidean norm of $\mathbb{R}^{2}$. The function
$\beta:\mbox{\normalfont{Cl}}(V)\rightarrow\mathbb{R}^{2}$ is $\beta
(\alpha)=(0,0)$. Therefore, in our case the set
\[
\mathcal{Z}=\left\{  \mathbf{z}_{\alpha}=\left(  \alpha,\beta(\alpha)\right)
,~~\alpha\in\mbox{\normalfont{Cl}}(V)\right\}  =\{(X_{0},Y_{0},0,0)\in
\mathbb{R}^{4}:r_{1}\leq\sqrt{X_{0}^{2}+Y_{0}^{2}}\leq r_{2}\}.
\]
Clearly for each $\mathbf{z}_{\alpha}\in\mathcal{Z}$ we can consider the
periodic solution $\mathbf{x}(t,\mathbf{z}_{\alpha})=(X(t),Y(t)$, $0,0)$ given
by \eqref{d3} of period $pT_{1}$.

\smallskip

Computing the fundamental matrix $M_{\mathbf{z}_{\alpha}}(t)$ of the linear
differential system with $\varepsilon=0$ associated to the $T$--periodic
solution $\mathbf{z}_{\alpha}=(X_{0},Y_{0},0,0)$ such that $M_{\mathbf{z}%
_{\alpha}}(0)$ is the identity of $\mathbb{R}^{4}$, we conclude that
$M(t)=M_{\mathbf{z}_{\alpha}}(t)$ is equal to
\[
\left(
\begin{array}
[c]{cccc}%
\cos\left(  \sqrt{3}\,t\right)  & \dfrac{\sin\left(  \sqrt{3}\,t\right)
}{\sqrt{3}} & 0 & 0\vspace{0.2cm}\\
-\sqrt{3}\sin\left(  \sqrt{3}\,t\right)  & \cos\left(  \sqrt{3}\,t\right)  &
0 & 0\vspace{0.2cm}\\
0 & 0 & \cos\left(  2\,t\right)  & \dfrac{\sin\left(  2\,t\right)  }{2}\vspace{0.2cm}\\
0 & 0 & -2\sin\left(  2\,t\right)  & \cos\left(  2\,t\right)
\end{array}
\right)  .
\]
Note that the matrix $M_{\mathbf{z}_{\alpha}}(t)$ does not depend of the
particular periodic solution $\mathbf{x}(t,\mathbf{z}_{\alpha})$. Since the
matrix
\[
M^{-1}(0)-M^{-1}(pT_{1})=\left(
\begin{array}
[c]{cccc}%
0 & 0 & 0 & 0\\
0 & 0 & 0 & 0\\
0 & 0 & 1-\cos\left(  \frac{4\sqrt{3}p\pi}{3}\right)  & \dfrac{\sin\left(
\frac{4\sqrt{3}p\pi}{3}\right)  }{2}\vspace{0.2cm}\\
0 & 0 & -\sin\left(  \frac{4\sqrt{3}p\pi}{3}\right)  & 1-\cos\left(
\frac{4\sqrt{3}p\pi}{3}\right)
\end{array}
\right)  ,
\]
satisfies the assumptions of statement (ii) of Theorem \ref{tt} because the
determinant
\[
\left\vert
\begin{array}
[c]{cc}%
1-\cos\left(  \frac{4\sqrt{3}p\pi}{3}\right)  & \dfrac{\sin\left(  \frac
{4\sqrt{3}p\pi}{3}\right)  }{2}\vspace{0.2cm}\\
-\sin\left(  \frac{4\sqrt{3}p\pi}{3}\right)  & 1-\cos\left(  \frac{4\sqrt
{3}p\pi}{3}\right)
\end{array}
\right\vert =4\sin^{2}\left(  \tfrac{2\sqrt{3}p\pi}{3}\right)  \neq0,
\]
we can apply this theorem to the unperturbed system.

\smallskip

Now $\xi:\mathbb{R}^{4}\rightarrow\mathbb{R}^{2}$ is $\xi(X,Y,Z,W)=(X,Y)$. We
calculate the function
\[
\mathcal{G}(X_{0},Y_{0})=\mathcal{G}(\alpha)=\xi\left(  \dfrac{1}{pT_{1}%
}\displaystyle\int_{0}^{pT_{1}}M_{\mathbf{z}_{\alpha}}^{-1}(t)G_{1}%
(t,\mathbf{x}(t,\mathbf{z}_{\alpha}))dt\right)  ,
\]
and we obtain
\[
\left(
\begin{array}
[c]{l}%
\mathcal{F}_{1}(X_{0},Y_{0})\vspace{0.2cm}\\
\mathcal{F}_{2}(X_{0},Y_{0})
\end{array}
\right)  =\left(
\begin{array}
[c]{c}%
\displaystyle\dfrac{1}{pT_{1}}%
{\displaystyle\int_{0}^{pT_{1}}}
\dfrac{\sin\left(  \sqrt{3}\,t\right)  }{\sqrt{3}}F_{1}(t,\Delta_{1}%
(t),\Delta_{2}(t),0,0)dt\vspace{0.2cm}\\
\displaystyle\dfrac{1}{pT_{1}}%
{\displaystyle\int_{0}^{pT_{1}}}
\cos\left(  \sqrt{3}\,t\right)  F_{1}(t,\Delta_{1}(t),\Delta_{2}(t),0,0)dt
\end{array}
\right)  .
\]
Using
\[
F_{1}(t,\Delta_{1}(t),\Delta_{2}(t),0,0)=\Delta_{1}(t)\text{ }f_{1}\left(
t,\Delta_{2}(t),0\right)
\]
we obtain the functions given by (\ref{e3}). Then, by Theorem \ref{tt} we have
that for every simple zero $(X_{0}^{\ast},Y_{0}^{\ast})\in V$ of the system of
nonlinear functions
\begin{equation}
\mathcal{F}_{1}(X_{0},Y_{0})=0,\quad\mathcal{F}_{2}(X_{0},Y_{0})=0, \label{hh}%
\end{equation}
we have a periodic solution $(X,Y,Z,W)(t,\varepsilon)$ of the unperturbed
system such that
\[
(X,Y,Z,W)(t,\varepsilon)\rightarrow(X_{0}^{\ast},Y_{0}^{\ast},0,0)\quad
\mbox{as $\e\to 0$.}
\]

\end{proof}

\section{Proof of the two corollaries}

\label{s4}

\begin{proof}
[Proof of Corollary \ref{c1}]Under the assumptions of Corollary \ref{c1} the
nonlinear equations \eqref{e4} becomes
\[%
\begin{array}
[c]{l}%
\mathcal{F}_{1}(X_{0},Y_{0})=\dfrac{Y_{0}(3X_{0}^{2}+Y_{0}^{2})}{48}%
,\vspace{0.2cm}\\
\mathcal{F}_{2}(X_{0},Y_{0})=\dfrac{\left(  3(\sqrt{3}-3X_{0})X_{0}^{2}%
-(\sqrt{3}+3X_{0})Y_{0}^{2}\right)  }{24}.
\end{array}
\]
This system has the following real solution%
\[
(X_{0}^{\ast},Y_{0}^{\ast})=\left(  \dfrac{\sqrt{3}}{3},0\right)
\]
Moreover%
\[
\det\left(  \left.  \dfrac{\partial(\mathcal{F}_{1},\mathcal{F}_{2})}%
{\partial(X_{0},Y_{0})}\right\vert _{_{(X_{0},Y_{0})=\left(  \frac{\sqrt{3}%
}{3},0\right)  }}\right)  =\frac{1}{384}%
\]
check that this solution is simple. So, by Theorem \ref{t1} we only have one
periodic solution of (\ref{dumb}). This completes the proof of the corollary.
\end{proof}

\begin{proof}
[Proof of Corollary \ref{c2}]Under the assumptions of Corollary \ref{c2} the
nonlinear equations \eqref{e4bis} becomes
\[%
\begin{array}
[c]{l}%
\mathcal{G}_{1}(Z_{0},W_{0})=\dfrac{(Z_{0}-1)W_{0}}{8},\vspace{0.2cm}\\
\mathcal{G}_{2}(Z_{0},W_{0})=\dfrac{\left(  4Z_{0}(2+Z_{0}-2Z_{0}^{2}%
)-W_{0}^{2}(1+2Z_{0})\right)  }{32}.
\end{array}
\]
This system has the following four real solutions
$$
(Z_{0}^{\ast},W_{0}^{\ast})=\left(  1,\pm\frac{2\sqrt{3}}{3}\right)  ,\text{
}\,(Z_{0}^{\ast},W_{0}^{\ast})=\left(  \frac{1-\sqrt{17}}{4},0\right)$$  

and

$$(Z_{0}^{\ast},W_{0}^{\ast})=\left(  \frac{1+\sqrt{17}}
{4},0\right)  .
$$
The solutions which differ in a sign are different initial conditions of the
same periodic solution of the system (\ref{e2}) and%
\[%
\begin{array}
[c]{l}%
\det\left(  \left.  \dfrac{\partial(\mathcal{G}_{1},\mathcal{G}_{1})}%
{\partial(Z_{0},W_{0})}\right\vert _{(Z_{0},W_{0})=\left(  1,\pm\frac
{2\sqrt{3}}{3}\right)  }\right)  =\dfrac{1}{32}\text{,\medskip}\vspace{0.2cm}\\
\det\left(  \left.  \dfrac{\partial(\mathcal{G}_{1},\mathcal{G}_{1})}%
{\partial(Z_{0},W_{0})}\right\vert _{(Z_{0},W_{0})=\left(  \frac{1-\sqrt{17}%
}{4},0\right)  }\right)  =\dfrac{17-7\sqrt{17}}{512},\text{\medskip}\vspace{0.2cm}\\
\det\left(  \left.  \dfrac{\partial(\mathcal{G}_{1},\mathcal{G}_{1})}%
{\partial(Z_{0},W_{0})}\right\vert _{(Z_{0},W_{0})=\left(  \frac{1+\sqrt{17}%
}{4},0\right)  }\right)  =\dfrac{17+7\sqrt{17}}{512}.
\end{array}
\]
Therefore, by Theorem \ref{t1a} we only have three periodic solutions of the
perturbed dumbbell satellite. This completes the proof of the corollary.
\end{proof}

\section*{Appendix: Basic results on averaging theory}

\label{ap}

In this appendix we present the basic result from the averaging theory that we
shall need for proving the main results of this paper.

\smallskip

We consider the problem of the bifurcation of $T$--periodic solutions from
differential systems of the form
\begin{equation}
\label{eq:4}\dot{\mathbf{x}}(t)= G_{0}(t,\mathbf{x})+\varepsilon
G_{1}(t,\mathbf{x})+\varepsilon^{2} G_{2}(t,\mathbf{x}, \varepsilon),
\end{equation}
with $\varepsilon=0$ to $\varepsilon\not = 0$ sufficiently small. Here the
functions $G_{0},G_{1}: \mathbb{R} \times\Omega\to\mathbb{R} ^{n}$ and
$G_{2}:\mathbb{R} \times\Omega\times(-\varepsilon_{0},\varepsilon_{0}%
)\to\mathbb{R} ^{n}$ are $\mathcal{C}^{2}$ functions, $T$--periodic in the
first variable, and $\Omega$ is an open subset of $\mathbb{R} ^{n}$. The main
assumption is that the unperturbed system
\begin{equation}
\label{eq:5}\dot{\mathbf{x}}(t)= G_{0}(t,\mathbf{x}),
\end{equation}
has a submanifold of periodic solutions.

\smallskip

Let $\mathbf{x}(t,\mathbf{z},\varepsilon)$ be the solution of the system
\eqref{eq:5} such that $\mathbf{x}(0,\mathbf{z},\varepsilon)=\mathbf{z}$. We
write the linearization of the unperturbed system along a periodic solution
$\mathbf{x}(t,\mathbf{z},0)$ as
\begin{equation}
\dot{\mathbf{y}}(t)=D_{\mathbf{x}}{G_{0}}(t,\mathbf{x}(t,\mathbf{z}%
,0))\mathbf{y}. \label{eq:6}%
\end{equation}
In what follows we denote by $M_{\mathbf{z}}(t)$ some fundamental matrix of
the linear differential system \eqref{eq:6}, and by $\xi:\mathbb{R}^{k}%
\times\mathbb{R}^{n-k}\rightarrow\mathbb{R}^{k}$ the projection of
$\mathbb{R}^{n}$ onto its first $k$ coordinates; i.e. $\xi(x_{1},\ldots
,x_{n})=(x_{1},\ldots,x_{k})$.

\smallskip

We assume that there exists a $k$--dimensional submanifold $\mathcal{Z}$ of
$\Omega$ filled with $T$--periodic solutions of \eqref{eq:5}. Then an answer
to the problem of bifurcation of $T$--periodic solutions from the periodic
solutions contained in $\mathcal{Z}$ for system \eqref{eq:4} is given in the
following result.

\begin{theorem}
\label{tt} Let $V$ be an open and bounded subset of $\mathbb{R} ^{k}$, and let
$\beta: \mbox{\normalfont{Cl}}(V)\to\mathbb{R} ^{n-k}$ be a $\mathcal{C}^{2}$
function. We assume that

\begin{itemize}
\item[(i)] $\mathcal{Z}=\left\{  \mathbf{z}_{\alpha}=\left(  \alpha,
\beta(\alpha)\right)  ,~~\alpha\in\mbox{\normalfont{Cl}}(V) \right\}
\subset\Omega$ and that for each $\mathbf{z}_{\alpha}\in\mathcal{Z}$ the
solution $\mathbf{x}(t,\mathbf{z}_{\alpha})$ of \eqref{eq:5} is $T$--periodic;

\item[(ii)] for each $\mathbf{z}_{\alpha}\in\mathcal{Z}$ there is a
fundamental matrix $M_{\mathbf{z}_{\alpha}}(t)$ of \eqref{eq:6} such that the
matrix $M_{\mathbf{z}_{\alpha}}^{-1}(0)- M_{\mathbf{z}_{\alpha}}^{-1}(T)$ has
in the upper right corner the $k\times(n-k)$ zero matrix, and in the lower
right corner a $(n-k)\times(n-k)$ matrix $\Delta_{\alpha}$ with $\det
(\Delta_{\alpha})\neq0$.
\end{itemize}

We consider the function $\mathcal{G} :\mbox{\normalfont{Cl}}(V) \to\mathbb{R}
^{k}$
\begin{equation}
\label{eq:7}\mathcal{G} (\alpha)=\xi\left(  \dfrac{1}{T} \int_{0}^{T}
M_{\mathbf{z}_{\alpha}}^{-1}(t)G_{1}(t,\mathbf{x}(t,\mathbf{z}_{\alpha}))
dt\right)  .
\end{equation}
If there exists $a\in V$ with $\mathcal{G} (a)=0$ and $\displaystyle{\det
\left(  \left(  {d\mathcal{G} }/{d\alpha}\right)  (a)\right)  \neq0}$, then
there is a $T$--periodic solution $\varphi(t,\varepsilon)$ of system
\eqref{eq:4} such that $\varphi(0,\varepsilon)\to\mathbf{z}_{a}$ as
$\varepsilon\to0$.
\end{theorem}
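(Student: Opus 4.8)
The plan is to recast the search for $T$--periodic solutions of \eqref{eq:4} as a finite--dimensional zero--finding problem for the time--$T$ displacement map, and then to resolve it by a Lyapunov--Schmidt reduction whose bifurcation function turns out to be exactly $\mathcal{G}$. First I would let $\mathbf{x}(t,\mathbf{z},\varepsilon)$ denote the solution of \eqref{eq:4} with $\mathbf{x}(0,\mathbf{z},\varepsilon)=\mathbf{z}$; since $G_0,G_1,G_2$ are $\mathcal{C}^2$ and $T$--periodic in $t$, this solution is $\mathcal{C}^2$ in $(\mathbf{z},\varepsilon)$, and it is $T$--periodic exactly when $\mathbf{z}$ is a zero of the displacement function $\mathbf{d}(\mathbf{z},\varepsilon)=\mathbf{x}(T,\mathbf{z},\varepsilon)-\mathbf{z}$. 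The difficulty to anticipate is that at $\varepsilon=0$ one has $\mathbf{d}(\mathbf{z}_\alpha,0)=0$ for every $\alpha\in\mathrm{Cl}(V)$, so $\mathbf{d}(\cdot,0)$ vanishes on the whole $k$--dimensional manifold $\mathcal{Z}$ and its $\mathbf{z}$--Jacobian is singular there; a direct application of the implicit function theorem is impossible, and the purpose of hypothesis (ii) is precisely to repair this degeneracy.

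Next I would expand $\mathbf{d}$ to first order in $\varepsilon$ via the variational equation \eqref{eq:6}. Writing $\mathbf{x}(t,\mathbf{z},\varepsilon)=\mathbf{x}(t,\mathbf{z},0)+\varepsilon\,\mathbf{y}_1(t,\mathbf{z})+O(\varepsilon^2)$, the coefficient solves the inhomogeneous linearization with $\mathbf{y}_1(0,\mathbf{z})=0$, so variation of constants gives $\mathbf{y}_1(t,\mathbf{z})=M_{\mathbf{z}}(t)\int_0^t M_{\mathbf{z}}^{-1}(s)G_1(s,\mathbf{x}(s,\mathbf{z},0))\,ds$, a quantity independent of the normalization of the fundamental matrix. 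Fixing a base point $\alpha$ and using $\mathbf{x}(T,\mathbf{z}_\alpha,0)=\mathbf{z}_\alpha$, I would left--multiply $\mathbf{d}$ by the invertible matrix $M_{\mathbf{z}_\alpha}^{-1}(T)$ — a rescaling depending on $\alpha$ but not on the transverse coordinate, hence harmless for the zero set. The resulting map has $\mathbf{z}$--Jacobian at $(\mathbf{z}_\alpha,0)$ equal to $M_{\mathbf{z}_\alpha}^{-1}(T)\big(\Phi_\alpha(T)-I\big)=M_{\mathbf{z}_\alpha}^{-1}(0)-M_{\mathbf{z}_\alpha}^{-1}(T)$, where $\Phi_\alpha(t)=M_{\mathbf{z}_\alpha}(t)M_{\mathbf{z}_\alpha}^{-1}(0)$ is the principal fundamental matrix and $\Phi_\alpha(T)$ the monodromy matrix; this identity is exactly why (ii) is stated in terms of $M^{-1}(0)-M^{-1}(T)$. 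Its linear--in--$\varepsilon$ term is $\int_0^T M_{\mathbf{z}_\alpha}^{-1}(s)G_1(s,\mathbf{x}(s,\mathbf{z}_\alpha,0))\,ds$.

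Then I would perform the Lyapunov--Schmidt reduction in the splitting $\mathbf{z}=(\alpha',\beta')\in\mathbb{R}^k\times\mathbb{R}^{n-k}$. By the block form in (ii), the $\beta'$--derivative of the last $n-k$ components of the rescaled displacement at $(\mathbf{z}_{\alpha'},0)$ is $\Delta_{\alpha'}$, which is invertible; since those components vanish along $\mathcal{Z}$ at $\varepsilon=0$, the implicit function theorem (applied uniformly for $\alpha'$ near $a$) yields a $\mathcal{C}^1$ map $\beta^*(\alpha',\varepsilon)$ with $\beta^*(\alpha',0)=\beta(\alpha')$ solving the transverse block. Substituting it back produces a reduced bifurcation function $B(\alpha',\varepsilon)$ valued in $\mathbb{R}^k$ with $B(\alpha',0)\equiv0$, so $B(\alpha',\varepsilon)=\varepsilon\big(T\,\mathcal{G}(\alpha')+O(\varepsilon)\big)$. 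Here the \emph{vanishing} upper--right block of (ii) is what makes the chain--rule term carrying $\partial_\varepsilon\beta^*$ drop out, leaving $\partial_\varepsilon B(\alpha',0)=\xi\big(\int_0^T M_{\mathbf{z}_{\alpha'}}^{-1}(s)G_1(s,\mathbf{x}(s,\mathbf{z}_{\alpha'},0))\,ds\big)$, which coincides with $T\,\mathcal{G}(\alpha')$ for the $\mathcal{G}$ of \eqref{eq:7}. Finally I would divide by $\varepsilon$ and apply the implicit function theorem to $\mathcal{G}(\alpha')+O(\varepsilon)=0$ at the simple zero $a$, where $\det\big((d\mathcal{G}/d\alpha)(a)\big)\neq0$, obtaining a $\mathcal{C}^1$ branch $\alpha^*(\varepsilon)\to a$; then $\varphi(t,\varepsilon)=\mathbf{x}\big(t,(\alpha^*(\varepsilon),\beta^*(\alpha^*(\varepsilon),\varepsilon)),\varepsilon\big)$ is a $T$--periodic solution of \eqref{eq:4} with $\varphi(0,\varepsilon)\to\mathbf{z}_a$.

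The main obstacle is conceptual rather than computational: it is the interlocking role of the two halves of hypothesis (ii). The nondegeneracy of $\Delta_\alpha$ is what permits eliminating the $n-k$ transverse directions, while the zero upper--right block is what guarantees that the surviving $k$--dimensional bifurcation equation is governed by the explicit averaged integral $\mathcal{G}$ and not by the implicitly defined correction $\beta^*$. Keeping this separation clean — and ensuring both implicit function theorem steps are carried out on a common neighborhood of $a$ so that $B$ and the final branch are well defined — is the crux; the variation--of--constants formula for $\mathbf{y}_1$ and the Jacobian identity $M_{\mathbf{z}_\alpha}^{-1}(0)-M_{\mathbf{z}_\alpha}^{-1}(T)=M_{\mathbf{z}_\alpha}^{-1}(T)\big(\Phi_\alpha(T)-I\big)$ are the supporting technical lemmas.
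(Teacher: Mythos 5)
Your argument is correct, and there is in fact nothing in the paper to compare it against: the paper does not prove Theorem \ref{tt} at all, but quotes it from Malkin \cite{Ma} and Roseau \cite{Ro}, pointing to \cite{BFL} for a shorter proof --- and your Lyapunov--Schmidt reduction of the time-$T$ displacement map (rescaling by $M_{\mathbf{z}_\alpha}^{-1}(T)$ to produce the Jacobian $M_{\mathbf{z}_\alpha}^{-1}(0)-M_{\mathbf{z}_\alpha}^{-1}(T)$, eliminating the transverse variables via the invertibility of $\Delta_\alpha$, and extracting the bifurcation function) is essentially that cited proof. You have also correctly isolated the key structural point: the invertible lower-right block of hypothesis (ii) drives the elimination of the $n-k$ transverse directions, while the vanishing upper-right block kills the chain-rule term carrying $\partial_\varepsilon\beta^*$, so that the reduced equation is $\varepsilon\left(T\,\mathcal{G}(\alpha')+O(\varepsilon)\right)=0$ and the simple zero $a$ of $\mathcal{G}$ continues by the implicit function theorem.
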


Theorem~\ref{tt} goes back to Malkin \cite{Ma} and Roseau \cite{Ro}, for a
shorter proof see \cite{BFL}.

\section*{Acknowledgements}
This work has been partially supported by MICINN/FEDER grant
number MTM2011--22587.

\end{document}